\renewcommand{\cite}{\citep}
\newtheorem{corollary}{Corollary}
\newtheorem{theorem}{Theorem}
\DeclareMathOperator{\supp}{\supp} 
\newcommand{\intone}{\int \limits_{0}^1}
\renewcommand{\geq}{\geqslant}
\renewcommand{\epsilon}{\varepsilon}
\newcommand{\rb}{\right]} 
\newcommand{\lb}{\left[}
\title[]{\bf A simple derivation of the integrals of products of Legendre polynomials with
  logarithmic weight}
\date{}
\begin{document}

\author[]{Sebastian Schmutzhard-H\"ofler}
\address{Faculty of Mathematics, University of Vienna,
  Oskar-Morgenstern-Platz 1, 1090 Vienna, Austria}
\email{sebastian.schmutzhard-hoefler@univie.ac.at}

\begin{abstract} 
\noindent We explore integrals of products of Legendre
polynomials with a logarithmic weight function. More precisely, for
Legendre polynomials $P_m$ and $P_n$ of orders $m$ and $n$,
respectively, we provide simple derivations of the integrals
\begin{align*}
  \intone  P_n(2x-1) P_m(2x-1)\log (x)dx.
\end{align*}
\end{abstract}
\maketitle
\section{Introduction}
\noindent In this short note we, compute the integrals of
products of Legendre polynomials with logarithmic
weight. Specifically, we are interested in
\begin{align*}
N_{n,m}: = \intone P_n(2x-1)P_m(2x-1) \log (x)dx
\end{align*}
for $m,n \in \mathbb{N}$.
The Legendre polynomials $P_0,P_1,P_2\ldots,$ are defined recursively
(\cite{Abr65}, \emph{22.7.10}). The initial polynomials are given by
\begin{align*}P_0(x)& = 1, \\
  P_1(x)& = x.
\end{align*}
For $n\geq 1$ the polynomials $P_n$ are defined by the
three-term-recurrence 
\begin{align} \label{eq_legrecurrence}
  (n+1)P_{n+1}(x)& = (2n+1)x P_n(x) - nP_{n-1}(x).
\end{align} 
In this work,  we derive for  $n \neq m$,
\begin{align}\label{eq_Nnneqm}
  N_{n,m} = \frac{(-1)^{n+m+1}}{|n-m|(n+m+1)},
\end{align}
and for $m = n$ 
\begin{align} \label{eq_Nneqm}
  \begin{split}
  (2n+1) N_{n,n} &= (2n-1) N_{n-1,n-1} - \frac{2}{(2n-1) 2n (2n+1)}  \\&=
    -1 -2 \sum \limits_{j = 1}^n \frac{1}{(2j-1)2j(2j+1)}.
  \end{split}
\end{align}
The solution for the special case $m = 0$ and  $n>0$ is obtained in
\cite{blue1979legendre}, and given by
\begin{align*}
\intone P_n(2x-1) \log(x)dx =  \frac{(-1)^{n+1}}{n(n+1)}.  
\end{align*}
This has been generalized to integrals of single Jacobi
polynomials with various weight functions, see for example
\cite{fettis1985further, gatteschi1980some, gautschi1979preceding, kalla1982integrals,
  srivastava1995some}. In \cite{shyam1984some}[(4.6)],integrals of products of
Legendre polynomials with logarithmic weights are represented by finite sums involving
the Gamma function and the Digamma function. The result \eqref{eq_Nnneqm}
can be found in \cite{Abr65}[8.14.8,8.14.10] as integrals of
products of Legendre functions of the first and the second kind. On
the other hand $N_{n,n}$ is given in \cite{Abr65}[8.14.9] via the
derivative of the Digamma function. Albeit being a special case and
easily derivable thereof, the simple representation of
\eqref{eq_Nneqm} seems to be new. Moreover, the elementary derivations of \eqref{eq_Nnneqm} and
\eqref{eq_Nneqm} seem, to our surprise and the best of our knowledge,
to be missing in the literature. 

\section{Main results}
\noindent We give rigorous derivations of the values of the integrals
  \begin{align*}
    N_{n,m} := \intone  P_n(2x-1) P_m(2x-1) \log(x) dx
  \end{align*}
  for arbitrary $n, m \in \mathbb{N}$.
\begin{theorem}\label{thm_ngtm}
 For $n,m \in \mathbb{N}$, $n > m$,
\begin{align}\label{eq_ngtm}
 N_{n,m}  =  \frac{(-1)^{n+m+1}}{(n-m)(n+m+1)}.
\end{align}
\end{theorem}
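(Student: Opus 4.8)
The plan is to turn $N_{n,m}$ into a finite sum of elementary integrals and then evaluate that sum by telescoping. Write $\Ptilde_j(x):=P_j(2x-1)$ for the shifted Legendre polynomials and expand the lower-degree factor into monomials via the classical closed form $\Ptilde_m(x)=\sum_{k=0}^m(-1)^{m+k}\binom{m}{k}\binom{m+k}{k}x^k$ (readily checked from the recurrence \eqref{eq_legrecurrence}, or from Rodrigues' formula). This gives
\begin{align*}
 N_{n,m}=\sum_{k=0}^m(-1)^{m+k}\binom{m}{k}\binom{m+k}{k}\,I_{n,k},\qquad I_{n,k}:=\intone x^k\,\Ptilde_n(x)\log x\,dx ,
\end{align*}
and the point is that $0\le k\le m<n$ throughout.

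The first substantial step is to compute $I_{n,k}$ for $0\le k<n$. I would use the Rodrigues representation $\Ptilde_n(x)=\frac{1}{n!}\frac{d^n}{dx^n}\big[x^n(x-1)^n\big]$ and integrate by parts $n$ times. Every boundary term vanishes: $x^n(x-1)^n$ has a zero of order $n$ at both endpoints, which annihilates the contributions at $x=1$ and, at $x=0$, dominates the (at worst logarithmic or mild negative-power) growth of the derivatives of $x^k\log x$. One is then left with $\frac{(-1)^n}{n!}\intone\big(\frac{d^n}{dx^n}[x^k\log x]\big)x^n(x-1)^n\,dx$, and since $k<n$ the elementary identity $\frac{d^n}{dx^n}[x^k\log x]=(-1)^{n-k-1}k!\,(n-k-1)!\,x^{k-n}$ reduces this to a Beta integral, yielding
\begin{align*}
 I_{n,k}=\frac{(-1)^{n+k+1}(k!)^2(n-k-1)!}{(n+k+1)!},\qquad 0\le k<n .
\end{align*}
(Equivalently, $I_{n,k}$ is the $s$-derivative at $s=k$ of the moment $\intone x^s\Ptilde_n(x)\,dx=\Gamma(s+1)^2/[\Gamma(s-n+1)\Gamma(s+n+2)]$, which vanishes at $s=k$ precisely because $\Gamma(s-n+1)$ has a pole there.)

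Plugging $I_{n,k}$ back in and using $\binom{m}{k}\binom{m+k}{k}(k!)^2=(m+k)!/(m-k)!$ collapses all signs into one global factor, leaving
\begin{align*}
 N_{n,m}=(-1)^{n+m+1}\sum_{k=0}^m\frac{(m+k)!\,(n-k-1)!}{(m-k)!\,(n+k+1)!}.
\end{align*}
It remains to evaluate this sum. With $c_k:=\frac{(m+k)!\,(n-k)!}{(m-k)!\,(n+k)!}$, a direct computation gives $c_k-c_{k+1}=\big[(n-k)(n+k+1)-(m-k)(m+k+1)\big]\frac{(m+k)!\,(n-k-1)!}{(m-k)!\,(n+k+1)!}$, and the bracket simplifies to $(n-m)(n+m+1)$. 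Hence the sum telescopes to $\frac{c_0-c_{m+1}}{(n-m)(n+m+1)}$, and since $c_0=1$ while $c_{m+1}=0$ (the factor $1/(m-m-1)!$ vanishes, $(n-m-1)!$ being finite as $n>m$), we arrive at \eqref{eq_ngtm}.

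I expect the computation of $I_{n,k}$ to be the main obstacle: one must carefully check that every boundary term in the repeated integration by parts vanishes in spite of the logarithmic singularity at the origin, and establish the formula for $\frac{d^n}{dx^n}[x^k\log x]$, which requires a small case split at the $(k+1)$-st derivative. The telescoping identity is then a one-line verification, and everything else is routine bookkeeping with binomial coefficients.
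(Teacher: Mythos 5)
Your proposal is correct, but it follows a genuinely different route from the paper. The paper proves \eqref{eq_ngtm} by induction on $m$: the base cases $m=0$ (quoted from Blue's result \eqref{eq_Nn0}) and $m=1$ are established first, and the induction step applies the three-term recurrence \eqref{eq_legrecurrence} twice --- once in the index $m$ and once in the index $n$ --- to express $(2n+1)mN_{n,m}$ through $N_{n+1,m-1}$, $N_{n-1,m-1}$ and $N_{n,m-2}$, after which the induction hypothesis and an algebraic simplification finish the argument; the only analytic input is the known $m=0$ integral. You instead compute everything directly: expanding $\Ptilde_m$ into monomials, evaluating the moments $I_{n,k}=\intone x^k\,\Ptilde_n(x)\log x\,dx$ for $k<n$ via Rodrigues' formula and $n$-fold integration by parts (your boundary-term analysis and the value $I_{n,k}=(-1)^{n+k+1}(k!)^2(n-k-1)!/(n+k+1)!$ check out, and correctly reproduce Blue's case at $k=0$), and then collapsing the resulting sum by the telescoping identity with $c_k=\frac{(m+k)!\,(n-k)!}{(m-k)!\,(n+k)!}$, whose bracket indeed reduces to $(n-m)(n+m+1)$. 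What your approach buys is self-containedness (you do not need the $m=0$ result from the literature; it falls out as a special case) and an explicit byproduct, namely the logarithmic moments $I_{n,k}$ themselves; the cost is heavier machinery --- Rodrigues' formula, the closed-form monomial expansion of $\Ptilde_m$ (which itself needs a short proof), the careful treatment of the logarithmic singularity in the repeated integration by parts, and the hypergeometric-style summation --- whereas the paper's induction uses nothing beyond the recurrence \eqref{eq_legrecurrence} and elementary algebra. One small presentational point: at $k=m$ the quantity $c_{m+1}$ involves $(m-k-1)!=(-1)!$, so you should either define $c_{m+1}:=0$ via the factor $(m-k)=0$ in your identity $c_{k+1}=(m+k+1)(m-k)\cdot\frac{(m+k)!\,(n-k-1)!}{(m-k)!\,(n+k+1)!}$, or invoke the convention $1/(-1)!=0$ explicitly, as you briefly do; with that clarified the telescoping argument is airtight.
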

\begin{proof}[Proof of Theorem \ref{thm_ngtm}]
  We prove the result by induction.
  \newline
  \newline
  \underline{Induction base case}
  \newline
  The base case for $m = 0$ and $n>m$ is, as discussed above, proven
  in \cite{blue1979legendre}:
  \begin{align} N_{n,0}  =  \intone  P_n(2x-1) \log(x)dx =
  \frac{(-1)^{n+1}}{n(n+1)} \label{eq_Nn0}.
  \end{align}
  \newline
  For $m = 1$ and $n>m$, we observe that $P_1(2x-1) = 2x-1$. It
  follows from Equation
  \eqref{eq_legrecurrence} that for $n\geq 1$
  \begin{align}
    P_n(2x-1) (2x-1) = \frac{n+1}{2n+1} P_{n+1}(2x-1) +
    \frac{n}{2n+1}P_{n-1}(2x-1). \label{eq_legrecurrence2c}
  \end{align}
  Substituting \eqref{eq_legrecurrence2c} into the definition of $N_{n,1}$ gives
  \begin{align*}
    \begin{split}
    N_{n,1} &:= \intone P_n(2x-1) (2x-1)\log(x) dx  \\&= \intone  \lb
    \frac{n+1}{2n+1} P_{n+1}(2x-1) + \frac{n}{2n+1}P_{n-1}(2x-1)\rb \log(x)dx
    \\ & = \frac{n+1}{2n+1} \intone P_{n+1}(2x-1) \log(x) dx +
    \frac{n}{2n+1} \intone P_{n-1}(2x-1)\log(x)dx 
    \\& =  \frac{n+1}{2n+1} N_{n+1,0} + \frac{n}{2n+1} N_{n-1,0}.
    \end{split}
  \end{align*}
  Substituting the results for
  $N_{n+1,0}$ and $N_{n-1,0}$, see Equation \eqref{eq_Nn0}, leads to
  \begin{align*}
    \begin{split}
    N_{n,1} &=  \frac{n+1}{2n+1} \frac{(-1)^{n+2}}{(n+1)(n+2)} +
      \frac{n}{2n+1} \frac{(-1)^n}{(n-1)n}  \\
      & = \frac{(-1)^{n+2}}{2n+1}\lb \frac{1}{n+2} + \frac{1}{n-1}\rb =
      \frac{(-1)^{n+2}}{2n+1}\frac{2n+1}{(n+2)(n-1)} \\&=
      \frac{(-1)^{n+2}}{(n+2)(n-1)},
    \end{split}
  \end{align*}
   which is the base case for $m = 1$.
  \newline
  \newline
  \underline{Induction hypothesis}
  \newline
  We assume the result to be valid for any $\tilde{m} \in \mathbb{N}$
  with $\tilde{m}<m$. Specifically, we assume
  \begin{align}\label{eq_indhyp}\begin{split}
    N_{n,m-1} &= \frac{(-1)^{n+m}}{(n-m+1)(n+m)} \textnormal{ for }
    n>m-1 \textnormal{ and } \\
    N_{n,m-2} &= \frac{(-1)^{n+m-1}}{(n-m+2)(n+m-1)}  \textnormal{ for
    } n>m-2.
    \end{split}
  \end{align}
  \newline
  \newline
  \underline{Induction step}
  \newline
  It follows from Equation \eqref{eq_legrecurrence} that for $m > 1$
  \begin{align} \label{eq_legrecurrence1b}
    m P_m(2x - 1) = (2m-1) (2x-1)
    P_{m-1}(2x-1) -(m-1) P_{m-2}(2x-1).
  \end{align}
  Multiplying $N_{n,m}$ by $m$ and substituting
  \eqref{eq_legrecurrence1b} yields
  \begin{align*}
    mN_{n,m} &= \intone P_n(2x-1) m P_m(2x-1)\log(x)dx    \\& = \intone P_n(2x-1)  (2m-1) (2x-1)
    P_{m-1}(2x-1)\log (x)dx \\&- \intone P_n(2x-1)(m-1) P_{m-2}(2x-1)
    \log (x)dx,
  \end{align*}
  which results in
  \begin{align} \label{eq_mPm}
    \begin{split}
      &m N_{n,m} =\\
   &(2m-1) \intone 
      (2x-1) P_n(2x-1) P_{m-1}(2x-1)\log (x)dx - (m-1) N_{n,m-2},
    \end{split}
  \end{align}
  where we made use of the definition of $N_{n,m-2}$.
  It follows from Equation \eqref{eq_legrecurrence} that for $n\geq 1$
  \begin{align} (2n+1) (2x-1)  P_n(2x-1) = (n+1)P_{n+1}(2x-1) + n
    P_{n-1}(2x-1) \label{eq_legrecurrence2a}
  \end{align}
  Multiplying Equation \eqref{eq_mPm} by $(2n+1)$ and subsituting
  \eqref{eq_legrecurrence2a} leads to
  \begin{align*}
    \begin{split}
&    (2n+1) m N_{n,m} = \\& (2m-1) \intone  \lb
    (n+1)P_{n+1}(2x-1) + n P_{n-1}(2x-1)\rb P_{m-1}(2x-1)\log (x) dx \\ & -
    (2n+1)(m-1) N_{n,m-2}.
    \end{split}
  \end{align*}
  Making use of the definition of $N_{n+1,m-1}$ and $N_{n-1,m-1}$ gives us
  \begin{align}\label{eq_2np1mNnm}
    \begin{split}
&    (2n+1) m N_{n,m}    = \\
&    (2m-1)(n+1) N_{n+1,m-1} + (2m-1) n N_{n-1,m-1} - (2n+1)(m+1)
      N_{n,m-2}.
    \end{split}
  \end{align}
  The assumption that $n>m$ implies that
  \begin{align*}
    n+1&>m-1, \\
    n-1&>m-1 \textnormal{ and }\\
    n&>m-2,
  \end{align*}
  hence the induction hypotheses \eqref{eq_indhyp} are valid for $N_{n+1,m-1}$, $N_{n-1,m-1}$
  and $N_{n,m-2}$. Substituting the respective results into Equation \eqref{eq_2np1mNnm}, we obtain
  \begin{align} \label{eq_2np1mNnma}
    \begin{split}
    (2n+1) m N_{n,m} = &(2m-1)(n+1) \frac{(-1)^{n+m+1}}{(n-m+2)(n+m+1)}
       \\& +(2m-1)n \frac{(-1)^{n+m-1}}{(n-m)(n+m-1)}  \\&
       -(2n+1)(m-1)\frac{(-1)^{n+m-1}}{(n-m+2)(n+m-1)}.
    \end{split}
  \end{align}
  Bringing the right hand side of \eqref{eq_2np1mNnma} on a common denominator shows that
  \begin{align*}
    (2n+1) m N_{n,m} = (-1)^{n+m+1}\frac{ (2n+1) m (n-m+2)(n+m-1)}{(n-m+2)(n+m+1)(n-m)(n+m-1)}.
  \end{align*}
  Dividing by $(2n+1) m$ and simplifying the fraction proves the result \eqref{eq_ngtm}.
\end{proof}
\begin{corollary}
  For $n,m \in \mathbb{N}$, $n \neq m$,
  \begin{align*}
    N_{n,m} = \frac{(-1)^{n+m+1}}{|n-m|(n+m+1)}.
  \end{align*}
\end{corollary}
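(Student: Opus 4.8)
The plan is to deduce the corollary from Theorem~\ref{thm_ngtm} by exploiting the evident symmetry $N_{n,m} = N_{m,n}$. This symmetry requires no computation: the integrand $P_n(2x-1)P_m(2x-1)\log(x)$ is literally unchanged when $n$ and $m$ are interchanged, so $N_{n,m} = N_{m,n}$ straight from the definition.

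Next I would split into the two cases allowed by the hypothesis $n \neq m$. If $n > m$, then Theorem~\ref{thm_ngtm} applies verbatim and gives $N_{n,m} = \frac{(-1)^{n+m+1}}{(n-m)(n+m+1)}$; since $n > m$ we have $n-m = |n-m|$, so this is already the asserted value. If instead $m > n$, I would first rewrite $N_{n,m} = N_{m,n}$ using the symmetry, then apply Theorem~\ref{thm_ngtm} with the roles of $n$ and $m$ swapped (legitimate because now $m > n$), obtaining $N_{m,n} = \frac{(-1)^{m+n+1}}{(m-n)(m+n+1)}$. Finally, observing that $m+n+1$ and the sign $(-1)^{m+n+1}$ are symmetric in $n$ and $m$, and that $m-n = |n-m|$ in this case, one again arrives at $\frac{(-1)^{n+m+1}}{|n-m|(n+m+1)}$.

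Since these two cases exhaust all pairs with $n \neq m$, the corollary follows. There is no real obstacle here; the only point worth a moment's care is checking that the sign factor and the factor $n+m+1$ are genuinely symmetric under $n \leftrightarrow m$, so that both cases collapse to the same closed form, with $|n-m|$ serving to unify $n-m$ (when $n>m$) and $m-n$ (when $m>n$) into a single expression.
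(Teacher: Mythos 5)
Your argument is correct and is essentially the paper's own proof: the case $n>m$ is Theorem~\ref{thm_ngtm} directly, and the case $m>n$ follows from the symmetry $N_{n,m}=N_{m,n}$ of the integrand together with the theorem applied with the roles of $n$ and $m$ exchanged. Your additional remarks on the symmetry of the sign and of $n+m+1$ simply make explicit what the paper leaves implicit.
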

\begin{proof}
  The case of $n>m$ is covered by Theorem \ref{thm_ngtm}.
  In the case of $m>n$, we obtain the result of the Corollary by
  observing that $N_{n,m} = N_{m,n}$.
\end{proof}
\begin{theorem} \label{thm_neqm}
  \begin{align}
    N_{0,0} & = -1, \label{eq_N00} \\
    3 N_{1,1} & = -\frac{4}{3} \label{eq_N11},
  \end{align}
  and for  $n >1$,
  \begin{align*}
  (2n+1) N_{n,n} &= (2n-1) N_{n-1,n-1} - \frac{2}{(2n-1) 2n (2n+1)}  \\&=
  -1 -2 \sum \limits_{j = 1}^n \frac{1}{(2j-1)2j(2j+1)}.
  \end{align*}
\end{theorem}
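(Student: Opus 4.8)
The plan is to dispose of the two base cases by direct integration and then to derive the recurrence by feeding the diagonal integral $N_{n,n}$ into the three-term recurrence \eqref{eq_legrecurrence} twice, in the same spirit as the proof of Theorem~\ref{thm_ngtm}, the new feature being that the manipulation now lands on strictly off-diagonal values $N_{a,b}$ with $a\neq b$, which are already known from the Corollary. For the base cases, I would use $\int_0^1 x^k\log(x)\,dx = -1/(k+1)^2$ (one integration by parts): this gives $N_{0,0}=\int_0^1\log(x)\,dx=-1$, which is \eqref{eq_N00}, and, since $P_1(2x-1)=2x-1$, expanding $(2x-1)^2=4x^2-4x+1$ gives $N_{1,1}=-\tfrac49$, i.e.\ $3N_{1,1}=-\tfrac43$, which is \eqref{eq_N11}.

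For the recurrence, fix $n>1$. First I would multiply $N_{n,n}$ by $n$ and replace one factor $nP_n(2x-1)$ using \eqref{eq_legrecurrence1b}, turning $nN_{n,n}$ into $(2n-1)\int_0^1(2x-1)P_{n-1}(2x-1)P_n(2x-1)\log(x)\,dx-(n-1)N_{n,n-2}$. The delicate point is which factor to expand next: replacing $(2x-1)P_{n-1}(2x-1)$ via \eqref{eq_legrecurrence2a} merely reproduces $nN_{n,n}$ together with the tautology $N_{n,n-2}=N_{n-2,n}$ and yields nothing, so instead one must expand $(2x-1)P_n(2x-1)$ through \eqref{eq_legrecurrence2a}. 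This produces $nN_{n,n}=\tfrac{2n-1}{2n+1}\big[(n+1)N_{n+1,n-1}+nN_{n-1,n-1}\big]-(n-1)N_{n,n-2}$, and now the Corollary applies to the two strictly off-diagonal integrals: both have index gap $2$ and an odd sign exponent, giving $N_{n+1,n-1}=-\tfrac{1}{2(2n+1)}$ and $N_{n,n-2}=-\tfrac{1}{2(2n-1)}$.

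Substituting these and multiplying by $(2n+1)/n$ leaves $(2n+1)N_{n,n}=(2n-1)N_{n-1,n-1}-\tfrac{(2n-1)(n+1)}{2n(2n+1)}+\tfrac{(n-1)(2n+1)}{2n(2n-1)}$, and the two trailing terms collapse to $-\tfrac{2}{(2n-1)2n(2n+1)}$ since, over the common denominator $2n(2n-1)(2n+1)$, the numerator is $(2n+1)^2(n-1)-(2n-1)^2(n+1)=-2$; this is the first displayed identity of the theorem. The second (closed-form) identity then follows by a one-line induction on $n$: the case $n=1$ is \eqref{eq_N11} rewritten as $-1-2/(1\cdot2\cdot3)$, and each application of the recurrence appends exactly the term $j=n$ to the sum.

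The only genuine obstacle is the symmetry trap just described: the naive choice of which recurrence to apply to which factor gives $0=0$. Once one commits to expanding $(2x-1)P_n(2x-1)$ rather than $(2x-1)P_{n-1}(2x-1)$, the rest is substitution of the Corollary's values — with careful attention to the signs $(-1)^{n+m+1}$ — together with the routine scalar identity $(2n+1)^2(n-1)-(2n-1)^2(n+1)=-2$.
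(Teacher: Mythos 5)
Your proposal is correct and follows essentially the same route as the paper: direct evaluation of $N_{0,0}$ and $N_{1,1}$, then lowering one factor via \eqref{eq_legrecurrence1b} and raising $(2x-1)P_n(2x-1)$ via \eqref{eq_legrecurrence2a}, substituting the off-diagonal values $N_{n+1,n-1}=-\tfrac{1}{2(2n+1)}$ and $N_{n,n-2}=-\tfrac{1}{2(2n-1)}$, and telescoping to the closed form. Your remark about avoiding the tautological expansion of $(2x-1)P_{n-1}(2x-1)$ is a valid observation, but otherwise the argument coincides with the paper's proof.
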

\begin{proof}[Proof of Theorem \ref{thm_neqm}]
  Equations \eqref{eq_N00} and \eqref{eq_N11} can be verified by using
  $P_0(2x-1) = 1$ and $P_1(2x-1) = 2x-1$ and by explicitly computing the integrals.
  For arbitrary $n>1$, it follows from Equation \eqref{eq_legrecurrence}
  that for $n>1$
  \begin{align} \label{eq_legrecurrence1a}
    P_n(2x-1) = \frac{2n-1}{n} (2x-1) P_{n-1}(2x-1) - \frac{n-1}{n} P_{n-2}(2x-1).
  \end{align}
  Replacing one of the factors $P_n(2x-1)$ in the defintion of $N_{n,n}$ by
  \eqref{eq_legrecurrence1a}  and using
  the definition of $N_{n,n-2}$ yields
  \begin{align} \label{eq_Nnn}
    \begin{split}
    N_{n,n} &= \intone P_n(2x-1)P_n(2x-1)\log(x) dx \\&=  \intone P_n(2x-1) \frac{2n-1}{n} (2x-1) P_{n-1}(2x-1) \log(x)dx
    - \frac{n-1}{n} N_{n,n-2}.
    \end{split}
  \end{align}
  It follows from Equation \eqref{eq_legrecurrence} that for $n\geq 1$
  \begin{align} \label{eq_legrecurrence2b}
    (2n+1) (2x-1)  P_n(2x-1) = (n+1)P_{n+1}(2x-1) + n P_{n-1}(2x-1).
  \end{align}
  By multiplication of Equation \eqref{eq_Nnn} with (2n+1) and substituting \eqref{eq_legrecurrence2b}
  we obtain
  \begin{align*}
    & (2n+1)N_{n,n} = \\&(2n-1) N_{n-1,n-1} +\frac{2n-1}{n}
    (n+1)N_{n+1,n-1}  - (2n+1) \frac{n-1}{n} N_{n,n-2}
  \end{align*}
  where we made use of the definition of $N_{n+1,n-1}$ and
  $N_{n,n-2}$. Using Theorem \ref{thm_ngtm} results in
  \begin{align*}
  (2n+1)N_{n,n} &=
    (2n-1) N_{n-1,n-1} - \frac{(2n-1)(n+1)}{2n(2n+1)} +
    \frac{(2n+1)(n-1)}{2n (2n-1)} = \\
    &(2n-1) N_{n-1,n-1} +
    \frac{-(2n-1)^2(n+1)+(2n+1)^2(n-1)}{(2n-1)2n(2n+1)}
  \end{align*}
  which can be easily simplified to
  \begin{align*}
      (2n+1)N_{n,n} &=
(2n-1) N_{n-1,n-1} -
    \frac{2}{(2n-1)2n(2n+1)}.
  \end{align*}
  Finally, recursively substituting the last equation yields, together
  with the result for $N_{0,0}$, that
    \begin{align*}
  (2n+1) N_{n,n} = -1 -2 \sum \limits_{j = 1}^n \frac{1}{(2j-1)2j(2j+1)},
    \end{align*}
    which concludes the proof.

\end{proof}

\bibliography{intloglegleg.bib}
\bibliographystyle{plain}

\end{document}